\newtheorem{lemma}{Lemma}[section]
\newtheorem{corollary}[lemma]{Corollary}
\newtheorem{theorem}[lemma]{Theorem}
\newtheorem{proposition}[lemma]{Proposition}
\newtheorem{remark}[lemma]{Remark}
\newcommand{\R}{\mathbb{R}}
\author[J. D. Garc\'{\i}a-Salda\~{n}a]{Johanna D. Garc\'{\i}a-Salda\~{n}a}
\address{Dept. de Matem\`{a}tiques \\
Universitat Aut\`{o}noma de Barcelona \\ Edifici C. 08193
Bellaterra, Barcelona. Spain} \email{johanna@mat.uab.cat}
\author[A. Gasull]{Armengol Gasull}
\address{Dept. de Matem\`{a}tiques.
Universitat Aut\`{o}noma de Barcelona. Edifici C. 08193 Bellaterra,
Barcelona. Spain} \email{gasull@mat.uab.cat}
\subjclass[2000]{Primary: 34C05; Secondary: 34C25, 37C27, 47H10} \keywords{Balance
harmonic method, planar polynomial system, hyperbolic limit cycle, Fourier series,
fixed point theorem}
\date{}
\dedicatory{} \commby{}
\begin{document}

\title[Harmonic Balance Method ]
{A theoretical basis for\\ the Harmonic Balance Method }
\begin{abstract}
The Harmonic Balance method provides a heuristic approach for
finding truncated Fourier series as an approximation to the periodic
solutions of ordinary differential equations. Another natural way
for obtaining these type of approximations consists in applying
numerical methods. In this paper we recover the pioneering results
of Stokes and Urabe that provide a theoretical basis for proving
that near these truncated series, whatever is the way they have been
obtained, there are actual periodic solutions of the equation. We
will restrict our attention to one-dimensional non-autonomous
ordinary differential equations and we apply the results obtained to
a couple of concrete examples coming from planar autonomous systems.
\end{abstract}

\maketitle

\section{Introduction and main results}

Consider the real non-autonomous differential equation
\begin{equation}\label{sis}
{x}'=X(x,t),
\end{equation}
where the prime denotes the derivative with respect to $t$,
$X:\Omega\times[0,2\pi]\rightarrow\R$ is a $\mathcal{C}^2$-function,
$2\pi$-periodic in $t$, and $\Omega\subset\R$ is a given open
interval.

There are several methods for finding approximations to the periodic
solutions of \eqref{sis}. For instance, the Harmonic Balance method
(HBM), recalled in subsection~\ref{bal}, or simply the numerical
approximations of the solutions of the differential equations. In
any case, from all the methods we can get a truncated Fourier
series, namely a trigonometric polynomial, that ``approximates'' an
actual periodic solution of the equation. The aim of this work is to
recover some old results of Stokes and Urabe that allow to use these
approximations to prove that near them there are actual periodic
solutions and also provide explicit bounds, in the infinity norm, of
the distance between both functions. To the best of our knowledge
these results are rarely used in the papers dealing with HBM.

When the methods are applied to concrete examples one has to deal
with the coefficients of the truncated Fourier series that are
rational numbers (once some number of significative digits is fixed,
see the examples of Section~\ref{exx}) that make more difficult the
subsequent computations. At this point we introduce in this setting
a classical tool, that as far as we know has never been used in this
type of problems: we approximate all the coefficients of the
truncated Fourier series by suitable convergents of their respective
expansions in continuous fractions. This is done in such a way that
using these new coefficients we obtain a new approximate solution
that is essentially at the same distance to the actual solution that
the starting approximation. With this method we obtain trigonometric
polynomials with nice rational coefficients that approximate the
periodic solutions.

Before stating our main result, and following~\cite{Stokes,Urabe},
we introduce some concepts. Let $\bar{x}(t)$ be a $2\pi$-periodic
$\mathcal{C}^1$-function,  we will say that $\bar{x}(t)$ is {\it
noncritical} with respect to \eqref{sis} if
\begin{equation}\label{delta}
\int_0^{2\pi}\frac{\partial}{\partial x}X(\bar{x}(t),t)\,dt\neq 0.
\end{equation}
Notice that if $\bar{x}(t)$ is a periodic solution of \eqref{sis}
then the concept of noncritical is equivalent to the one of being
{\it hyperbolic}, see~\cite{Lloyd}.

As we will see in Lemma~\ref{Le1}, if $\bar x(t)$ is noncritical
w.r.t. equation~\eqref{sis}, the linear periodic system
$$
{y}'=\frac{\partial }{\partial x}X(\bar{x}(t),t)\,y+b(t),
$$
has a unique periodic solution $y_b(t)$ for each smooth
$2\pi$-periodic function $b(t)$. Moreover, once $X$ and $\bar x$ are
fixed, there exists a constant $M$ such that
\begin{equation}\label{norm}
||y_b||_{_\infty}\leq M||b||_{2},
\end{equation}
where as usual, for a continuous $2\pi$-periodic function $f$,
\[
||f||_{_2}=\sqrt{\frac{1}{2\pi}\int_0^{2\pi} f^2(t)dt},\quad
||f||_{_\infty}=\max_{x\in\R}|f(x)| \quad\mbox{and}\quad
||f||_{_2}\leq ||f||_{_\infty} .\] Any constant
satisfying~\eqref{norm} will be called a {\it deformation constant
associated to~$\bar x $ and~$X$}. Finally, consider
\begin{equation}\label{ss}
s(t):=\bar{x}'(t)-X(\bar{x}(t),t).
\end{equation}
 We will say that $\bar{x}(t)$ is
{\it an approximate solution of \eqref{sis} with accuracy
$S=||s||_{_2}$}. For simplicity, if $\tilde{S}>S$ we also will say
that $\bar{x}(t)$ has accuracy $\tilde{S}$. Notice that actual
periodic solutions of~\eqref{sis} have accuracy~0, in this sense,
the function $s(t)$ measures how far is $\bar{x}(t)$ of being an
actual periodic solution of~\eqref{sis}.

Next theorem improves some of the results of Stokes~\cite{Stokes}
and Urabe~\cite{Urabe} in the one-dimensional setting. More
concretely, in those papers they prove the existence and uniqueness
of the periodic orbit when $4M^2KS<1$. We present a similar proof
with the small improvement $2M^2KS<1$. Moreover our result gives,
under an additional condition, the hyperbolicity of the periodic
orbit.

\begin{theorem}\label{T1} Let $\bar{x}(t)$ be
a $2\pi$-periodic $\mathcal{C}^1$--function such that:
\begin{itemize}
\item[-] it is noncritical w.r.t. equation~\eqref{sis} and has $M$ as a
deformation constant,
\item[-] it has accuracy $S$ w.r.t. equation~\eqref{sis}.
\end{itemize}
Given $I:=[\min_{\{t\in\R\}}
\bar{x}(t)-2MS,\max_{\{t\in\R\}}\bar{x}(t)+2MS]\subset\Omega$, let
$K<\infty$ be a constant such that
\[\max_{(x,t)\in I\times [0,2\pi]}\left|\frac{\partial^2 }{\partial
x^2}X(x,t)\right|\le K.
\]
Then, if
\[
2M^2KS<1,
\]
there exists a $2\pi$-periodic solution $x^*(t)$ of~\eqref{sis}
satisfying
\[||x^*-\bar{x}||_{_\infty}\leq 2MS\]
and it is the unique periodic solution of the equation entirely
contained in this strip. If in addition,
\[
\left|\int_0^{2\pi}\frac{\partial}{\partial
x}X(\bar{x}(t),t)\,dt\right|> \frac{2\pi}M,
\]
then the periodic orbit $x^*(t)$ is hyperbolic and its stability is
given by the sign of this integral.
\end{theorem}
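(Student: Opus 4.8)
The plan is to look for the periodic solution in the form $x^*(t)=\bar x(t)+u(t)$ with $u$ a small $2\pi$-periodic function, and to recast the problem as a fixed point equation to be solved by the contraction mapping principle. Substituting into \eqref{sis} and expanding $X(\cdot,t)$ to first order about $\bar x(t)$ gives
\[
u'(t)=\frac{\partial}{\partial x}X(\bar x(t),t)\,u(t)-s(t)+R(u,t),
\]
where $s$ is the defect \eqref{ss} and $R(u,t):=X(\bar x+u,t)-X(\bar x,t)-\frac{\partial}{\partial x}X(\bar x,t)\,u$ is the Taylor remainder, which satisfies $|R(u,t)|\le \tfrac{K}{2}u^2$ as long as $\bar x+u$ remains in $I$ (by Taylor's theorem and the bound on $\partial^2 X/\partial x^2$). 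Since $\bar x$ is noncritical with deformation constant $M$, Lemma~\ref{Le1} allows me to define the operator $\mathcal T$ that sends $u$ to the unique $2\pi$-periodic solution of $y'=\frac{\partial}{\partial x}X(\bar x,t)\,y+b(t)$ with forcing $b:=-s+R(u,\cdot)$. A fixed point of $\mathcal T$ is exactly a periodic solution of \eqref{sis} of the desired form, and conversely every periodic solution lying in the strip yields such a fixed point, so the existence and uniqueness statements reduce to a unique-fixed-point statement for $\mathcal T$.

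Next I would verify that $\mathcal T$ is a contraction of the closed ball $B:=\{u:\,||u||_{_\infty}\le 2MS\}$ in the Banach space of continuous $2\pi$-periodic functions. For invariance $\mathcal T(B)\subset B$, the deformation bound \eqref{norm} together with $||R(u,\cdot)||_{_2}\le ||R(u,\cdot)||_{_\infty}\le \tfrac{K}{2}||u||_{_\infty}^2$ gives
\[
||\mathcal T(u)||_{_\infty}\le M\Bigl(S+\tfrac{K}{2}(2MS)^2\Bigr)=MS\bigl(1+2M^2KS\bigr)\le 2MS,
\]
the last inequality being equivalent to $2M^2KS\le 1$. For the contraction estimate, linearity of the solution operator shows that $\mathcal T(u_1)-\mathcal T(u_2)$ is the periodic solution with forcing $R(u_1,\cdot)-R(u_2,\cdot)$, so \eqref{norm} gives $||\mathcal T(u_1)-\mathcal T(u_2)||_{_\infty}\le M\,||R(u_1,\cdot)-R(u_2,\cdot)||_{_2}$; the mean value theorem and $|\theta|\le 2MS$ on $B$ yield $|R(u_1,t)-R(u_2,t)|\le 2MKS\,|u_1(t)-u_2(t)|$, hence $||\mathcal T(u_1)-\mathcal T(u_2)||_{_\infty}\le 2M^2KS\,||u_1-u_2||_{_\infty}$. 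Under $2M^2KS<1$ this is a genuine contraction, so $\mathcal T$ has a unique fixed point $u^*\in B$, giving $x^*=\bar x+u^*$ with $||x^*-\bar x||_{_\infty}\le 2MS$ and uniqueness in the strip. The delicate point, and the step I expect to be the main obstacle, is the bookkeeping around the single hypothesis $2M^2KS<1$: it must simultaneously secure the invariance of $B$, the contraction constant, and—through the radius $2MS$—the fact that $\bar x+u$ never leaves $I$, so that the bound $K$ is legitimately available throughout.

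For the hyperbolicity statement I would use that the scalar linear equation $\dot y=\frac{\partial}{\partial x}X(x^*(t),t)\,y$ has Floquet multiplier $\exp\bigl(\int_0^{2\pi}\frac{\partial}{\partial x}X(x^*(t),t)\,dt\bigr)$, so $x^*$ is hyperbolic precisely when this integral is nonzero, with stability determined by its sign. Comparing with the integral along $\bar x$, the mean value theorem and $||x^*-\bar x||_{_\infty}\le 2MS$ give $\bigl|\frac{\partial}{\partial x}X(x^*,t)-\frac{\partial}{\partial x}X(\bar x,t)\bigr|\le 2MKS$, so the two integrals differ by at most $4\pi MKS$. Since the additional hypothesis provides $\bigl|\int_0^{2\pi}\frac{\partial}{\partial x}X(\bar x,t)\,dt\bigr|>2\pi/M$ and, once again, $4\pi MKS<2\pi/M$ is equivalent to $2M^2KS<1$, the integral along $x^*$ is nonzero and carries the same sign as the one along $\bar x$. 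This gives hyperbolicity and the asserted stability, and amounts only to the short estimate above once the fixed point part is in place.
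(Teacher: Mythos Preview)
Your proposal is correct and follows essentially the same approach as the paper: the substitution $x^*=\bar x+u$, the contraction mapping on the ball of radius $2MS$ in the sup-norm, the same invariance and Lipschitz estimates via the Taylor remainder and the deformation bound, and the same comparison of Floquet integrals for hyperbolicity. The only small point you leave implicit, which the paper spells out, is the bootstrap from a continuous fixed point to a $\mathcal C^1$ solution of the ODE (via the integral form of the fixed-point equation); this is routine, so the argument goes through as you describe.
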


Once some approximate solution is guessed, for applying Theorem~\ref{T1} we need
to compute the three constants appearing in its statement. In general, $K$ and $S$
can be easily obtained. Recall for instance that $||s||_{_2}$, when $s$ is a
trigonometric polynomial, can be computed from Parseval's Theorem. On the other
hand $M$ is much more difficult to be estimated. In Lemma~\ref{cota} we give a
result useful for computing it in concrete cases, that is different from the
approach  used in \cite{Stokes,Urabe,UR}.

Assuming that a non-autonomous differential equation has an
hyperbolic periodic orbit, the results of \cite{Urabe} also
guarantee that, if take a suitable trigonometric polynomial
$\bar{r}(t)$ of sufficiently high degree, we can apply the first
part of Theorem~\ref{T1}. Intuitively, while the value of the
accuracy $S$ goes to zero when we increase the degree of the
trigonometric polynomial, the values $M$ and $K$ remain bounded.
Thus at some moment it holds that $2M^2KS<1$.

In Section~\ref{exx} we apply Theorem~\ref{T1} to localize the limit
cycles and prove its uniqueness, in a given region, and its
hyperbolicity for two planar polynomial autonomous systems. The
first one is considered in Subsection~\ref{ss1} and is a simple
example for which the exact limit cycle is already known. We do our
study step by step to illustrate how the method suggested by
Theorem~\ref{T1} works in a concrete example. In particular we
obtain an approximation $\bar x(t)$ of the periodic orbit by using a
combination between the HBM until order 10 and a suitable choice of
the convergents obtained from the theory of continuous fractions
applied to the approach obtained by the HBM.

The second case corresponds to the rigid cubic system
\begin{equation*}
\begin{array}{lll}
\dot x&=&-y+\frac{x}{10}(1-x-10x^2),\\ \dot
y&=&\phantom{-}x+\frac{y}{10}(1-x-10x^2),
\end{array}
\end{equation*}
that in polar coordinates writes as $\dot
r=r/10-\cos(\theta)r^2/10-\cos^2(\theta)r^3,$ $\dot \theta=1$, or equivalently,
\begin{equation}\label{2aa}
r'=\frac{dr}{dt}=\frac{1}{10}\,r-\frac{1}{10}\cos(t)\,r^2
-\cos^2(t)\,r^3,
\end{equation}
 which has a unique positive periodic orbit, see
also~\cite{GaPrTo}. Notice that we have renamed $\theta$ as $t.$ We prove:

\begin{proposition}\label{ppp}
Consider the periodic function {\small\begin{equation*}
\bar{r}(t)=\frac{4}{9}-\frac{1}{693}\cos(t)-\frac{1}{51}\sin(t)
-\frac{1}{653}\cos(2t)-\frac{1}{45}\sin(2t)-\frac{1}{780}\cos(3t).
\end{equation*}}
Then, the differential equation~\eqref{2aa} has a periodic solution
$r^*(t)$, such that \[ ||\bar{r}-r^*||_{_\infty}\leq 0.042,
\]
which is hyperbolic and stable and it is the only periodic solution
of~\eqref{2aa} contained in this strip.
\end{proposition}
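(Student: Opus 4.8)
The plan is to apply Theorem~\ref{T1} directly to the approximate solution $\bar r(t)$ for equation~\eqref{2aa}, so the whole task reduces to producing certified numerical values of the three constants $M$, $S$ and $K$ and checking the two inequalities $2M^2KS<1$ and $\left|\int_0^{2\pi}\partial_r X(\bar r(t),t)\,dt\right|>2\pi/M$. Here $X(r,t)=\frac{1}{10}r-\frac{1}{10}\cos(t)r^2-\cos^2(t)r^3$, so all the required quantities are explicit. First I would compute the accuracy $S=\|s\|_{_2}$, where $s(t)=\bar r'(t)-X(\bar r(t),t)$. Since $\bar r$ is a trigonometric polynomial and $X$ is polynomial in $r$, substituting $\bar r$ into $X$ produces another trigonometric polynomial (after expanding the powers $\bar r^2$, $\bar r^3$ and the products with $\cos t$, $\cos^2 t$); hence $s(t)$ is itself a trigonometric polynomial and $\|s\|_{_2}$ can be evaluated exactly from its Fourier coefficients by Parseval's theorem. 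I expect $S$ to be small, on the order of $10^{-3}$, which is what makes the final inequalities feasible.

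Next I would fix the strip. Computing $\min_t\bar r(t)$ and $\max_t\bar r(t)$ gives the interval $I=[\min\bar r-2MS,\max\bar r+2MS]$, but to pin down $I$ I first need a bound on $M$. To obtain the deformation constant $M$ I would invoke Lemma~\ref{cota}, the auxiliary result announced in the text for estimating $M$ in concrete cases; applied to the linearization $\partial_r X(\bar r(t),t)=\frac{1}{10}-\frac{1}{5}\cos(t)\bar r(t)-3\cos^2(t)\bar r^2(t)$, it should yield an explicit admissible value of $M$. With $M$ in hand the endpoints of $I$ become concrete numbers, and since $\bar r$ hovers near $4/9$ with a correction of order $2MS$ of size a few hundredths, the interval $I$ will be a small neighborhood of $4/9$ bounded away from the singular behavior, so $I\subset\Omega$ (here $\Omega$ being the positive $r$-axis or a suitable open interval around $4/9$).

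Then I would bound $K$. Since $\partial_r^2 X(r,t)=-\frac{1}{5}\cos(t)-6\cos^2(t)\,r$, on $I\times[0,2\pi]$ this is maximized by taking $|\cos t|=1$ and $r$ at the endpoint of $I$ farthest from zero; a direct estimate $\left|\partial_r^2 X\right|\le \frac{1}{5}+6\max_{r\in I}|r|$ gives a clean value of $K$ of order $3$. With explicit $M$, $S$, $K$ I would verify numerically that $2M^2KS<1$, which should hold comfortably because $S$ is tiny while $M$ and $K$ are moderate; this delivers the existence, the bound $\|\bar r-r^*\|_{_\infty}\le 2MS\le 0.042$, and the uniqueness in the strip. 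Finally, for hyperbolicity and stability I would evaluate $\int_0^{2\pi}\partial_r X(\bar r(t),t)\,dt$ exactly (again a Fourier/Parseval computation, since only the constant term of the integrand survives), check that its absolute value exceeds $2\pi/M$, and read off the sign: I expect the constant term to be negative, forcing a negative integral and hence a stable hyperbolic orbit.

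The main obstacle I anticipate is the rigorous control of $M$. Unlike $S$ and $K$, which follow from finite algebraic computations on trigonometric polynomials, the deformation constant governs the inverse of the linearized periodic operator and is genuinely harder to bound sharply; everything hinges on Lemma~\ref{cota} giving an $M$ small enough that both $2M^2KS<1$ and the threshold $2\pi/M$ in the hyperbolicity condition are met simultaneously. A secondary technical point is ensuring that the arithmetic is carried out with certified (interval) bounds rather than floating-point approximations, so that the strict inequalities and the final constant $0.042$ are genuinely proved and not merely numerically plausible.
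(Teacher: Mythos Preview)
Your plan is correct and matches the paper's proof essentially step for step: the paper computes $S\approx 0.003$ via Parseval, obtains $M=7$ from Lemma~\ref{cota} (using a piecewise-linear lower bound with $7$ pieces and $\ell=1/18$), takes $K=1/5+6(0.512)=3.272$ on $I=[0.358,0.512]$, and checks the hyperbolicity integral exceeds $2\pi/M$. One calibration warning: the key inequality is \emph{not} comfortable---the paper gets $2M^2KS\approx 0.962$, so you will need to be careful that your bound for $M$ from Lemma~\ref{cota} is reasonably sharp and that you do not give anything away in bounding $K$ or $S$, or the argument will fail by a hair.
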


As we will see, in this case we will find computational difficulties
to obtain the order three approximation given by the HBM. So we will
get it first approaching numerically the periodic solution; then
computing, also numerically, the first terms of its Fourier series
and finally using again the continuous fractions approach to
simplify the values appearing in our computations. We also will see
that the same approach works for other concrete rigid systems.

Similar examples for second order differential equations have also been studied
in~\cite{UR}.

\section{Preliminary results}

This section contains some technical lemmas that are useful for
proving Theorem~\ref{T1} and for obtaining in concrete examples the
constants appearing in its statement. We also include a very short
overview of the HBM adapted to our interests. See~\cite{Mi} for a
more general point of view on the HBM.

As usual, given $A\subset\R$, ${\bf 1}_A:\R\rightarrow \R$ denotes
the {\it characteristic function of $A$}, that is, the function
takes the value 1 when $x\in A$ and the value 0 otherwise.

\begin{lemma}\label{Le1}
Let $a(t)$ and $b(t)$ be continuous real $2\pi$-periodic functions.
Consider the non-autonomous linear ordinary differential equation
\begin{equation}
x'=a(t)x+b(t). \label{eqlineal}
\end{equation}
If $A(2\pi)\neq 0$, where $A(t):=\int_0^t a(s)ds$, then for each
$b(t)$ the equation \eqref{eqlineal} has a unique $2\pi$-periodic
solution $x_b(t):=\int_0^{2\pi} H(t,s)b(s)ds,$ where the kernel
$H(t,s)$ is given by the piecewise function
\begin{equation}\label{kernel}
H(t,s)=\frac{e^{A(t)}}{1-e^{A(2\pi)}}\left[e^{-A(s)}{\bf
1}_{[0,t]}(s)+ e^{A(2\pi)-A(s)}{\bf
1}_{[t,2\pi]}(s)\right].\end{equation} Moreover
$||x_b||_{_\infty}\leq 2\pi
\max_{t\in[0,2\pi]}||H(t,\cdot)||_{_2}\,||b||_{_2}$.
\end{lemma}

\begin{proof} Since \eqref{eqlineal} is linear, its general solution is
\begin{equation}\label{soleqlin}
x(t)=e^{A(t)}\left(x_0+\int_0^tb(s)e^{-A(s)}ds\right).
\end{equation}
If we impose that the solution is $2\pi$-periodic, i.e.,
$x(0)=x(2\pi)$, we get
\begin{equation}\label{x0}
x_0=\frac{e^{A(2\pi)}}{1-e^{A(2\pi)}}\int_0^{2\pi}b(s)e^{-A(s)}ds.
\end{equation}
By replacing $x_0$ in \eqref{soleqlin} by the right hand side of
\eqref{x0} we obtain that
\begin{align*}
x_b(t)&=\frac{e^{A(t)}}{1-e^{A(2\pi)}}\left[e^{A(2\pi)}\int_0^{2\pi}b(s)e^{-A(s)}ds
+(1-e^{A(2\pi)})\int_0^{t}b(s)e^{-A(s)}ds\right]\\&=
\frac{e^{A(t)}}{1-e^{A(2\pi)}}\left[e^{A(2\pi)}\int_t^{2\pi}b(s)e^{-A(s)}ds
+\int_0^{t}b(s)e^{-A(s)}ds\right]\\&= \int_0^{2\pi}H(t,s)b(s)ds.
\end{align*}
Therefore the first assertion follows. On another hand, by the
Cauchy-Schwarz inequality,
$$
|x_b(t)|\leq
\sqrt{\int_0^{2\pi}H^2(t,s)ds}\sqrt{\int_0^{2\pi}b^2(s)ds}.
$$
Therefore
$$
||x_b||_{_\infty}\leq
2\pi\max_{t\in[0,2\pi]}||H(t,\cdot)||_{_2}\,||b||_{_2}.
$$
This complete the proof.
\end{proof}

\begin{corollary}\label{ccc} A deformation constant~$M$ associated
to~$\bar x$ and $X$ is
\[
M:= 2\pi \max_{t\in[0,2\pi]}||H(t,\cdot)||_{_2},
\]
where $H$ is given in~\eqref{kernel} with
$A(t)=\int_0^t\frac{\partial }{\partial x}X(\bar{x}(t),t)\,dt.$
\end{corollary}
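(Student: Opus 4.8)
The plan is to recognize the statement as a direct specialization of Lemma~\ref{Le1}. First I would set $a(t):=\frac{\partial}{\partial x}X(\bar{x}(t),t)$ and observe that, since $X$ is a $\mathcal{C}^2$-function and $\bar{x}$ is a $2\pi$-periodic $\mathcal{C}^1$-function, $a(t)$ is a continuous real $2\pi$-periodic function. With this notation the linear system appearing in the definition of a deformation constant is precisely equation~\eqref{eqlineal} with $b(t)$ an arbitrary smooth $2\pi$-periodic function.

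The next step is to check that the noncritical hypothesis on $\bar{x}$ is exactly the condition required by Lemma~\ref{Le1}. Writing $A(t)=\int_0^t a(s)\,ds$, the value $A(2\pi)=\int_0^{2\pi}\frac{\partial}{\partial x}X(\bar{x}(t),t)\,dt$ is nonzero precisely because $\bar{x}$ is noncritical w.r.t.~\eqref{sis}, see~\eqref{delta}. Hence the hypothesis $A(2\pi)\neq 0$ of Lemma~\ref{Le1} is satisfied with this choice of $a$.

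I would then invoke Lemma~\ref{Le1} directly. It guarantees that for every such $b$ the equation has a unique $2\pi$-periodic solution $y_b(t)=\int_0^{2\pi}H(t,s)b(s)\,ds$, with $H$ given by~\eqref{kernel} for this particular $A$, and that
\[
||y_b||_{_\infty}\leq 2\pi\,\max_{t\in[0,2\pi]}||H(t,\cdot)||_{_2}\,||b||_{_2}.
\]
Comparing with the defining inequality~\eqref{norm}, this shows that the constant $M:=2\pi\,\max_{t\in[0,2\pi]}||H(t,\cdot)||_{_2}$ satisfies $||y_b||_{_\infty}\leq M\,||b||_{_2}$ for all admissible $b$, i.e.\ it is a deformation constant associated to $\bar{x}$ and $X$, which is exactly the assertion of the corollary.

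There is essentially no hard step here: the entire content is the verification that the hypotheses of Lemma~\ref{Le1} are met and that its conclusion is \emph{verbatim} the deformation-constant bound~\eqref{norm}. The only point deserving a line of care is confirming the regularity and periodicity of $a(t)$, so that Lemma~\ref{Le1} legitimately applies; everything else is a direct reading-off of the lemma with $b$ ranging over the smooth $2\pi$-periodic functions.
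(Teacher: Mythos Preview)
Your proposal is correct and matches the paper's approach: the paper gives no explicit proof for this corollary, treating it as an immediate consequence of Lemma~\ref{Le1}, and your argument spells out exactly that specialization (choosing $a(t)=\frac{\partial}{\partial x}X(\bar{x}(t),t)$, noting that the noncritical hypothesis gives $A(2\pi)\neq 0$, and reading off the bound).
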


Now we prove a technical result that will allow us to compute in practice
deformation constants. In fact we will find an upper bound of $M$ that will avoid
the integration step needed in the computation of the norm $||\cdot||_{_2}$.
First, we introduce some notation.

Given a function $A:[0,2\pi]\to \R$, a partition $t_i=ih,
i=0,1,\ldots,N,$ of the interval $[0,2\pi]$, where $h=2\pi/N$, and a
positive number $\ell$, we consider the function $L:[0,2\pi]\to \R$
given by the continuous linear piecewise function joining the points
$(t_i,A(t_i)-\ell)$. Notice that $L(t)=\sum_{i=0}^{N-1}L_i(t){\bf
1}_{I_i}$, where $I_i=[t_i,t_{i+1}]$ and
\[L_i(t)= \frac{A(t_{i+1})-A(t_i)}h (t-t_i)+f(t_i):=-\frac12(\alpha_i
t+\beta_i).\]

We will say that $L$ is {\it an adequate lower bound of $A$} if it
holds that $L(t)<A(t)$ for all $t\in[0,2\pi].$ It is clear that
smooth functions have always adequate functions, that approach to
them.

In next result we will use the following functions
\begin{equation}\label{psi}
\Psi_m(t):=\sum_{i=0}^{m-1} J_i+\lambda^2\sum_{i=m-1}^{N-1}
J_i+(1-\lambda^2)\frac{e^{\beta_m}}{\alpha_m}\left(
e^{\alpha_mt}-e^{\alpha_mt_{m}}\right),
\end{equation}
where
\[
J_i:=\int_{t_i}^{t_{i+1}} e^{-2L(s)}ds=\int_{t_i}^{t_{i+1}}
e^{-2L_i(s)}ds=\frac{e^{\beta_i}}{\alpha_i}\left(
e^{\alpha_it_{i+1}}-e^{\alpha_it_{i}}\right)
\]
and $\lambda=e^{A(2\pi)}$.

\begin{lemma}\label{cota} Let $L$ be an adequate lower bound of $A$, where $A$ is
the function given in Lemma~\ref{Le1}. Consider the functions
$\Psi_m(t), m=0,1,\ldots,N-1$, given in~\eqref{psi}. Therefore,
following also the notation introduced in that Lemma, it holds that
$||x_b||_{_\infty}\leq N\,||b||_{_2}$, where
\[
N= \frac{\sqrt{2\pi}}{|1-\lambda|}
\max_{t\in[0,2\pi]}e^{A(t)}\sqrt{\sum_{m=0}^{N-1} \Psi_m(t){\bf
1}_{I_m}(t)}.
\]
\end{lemma}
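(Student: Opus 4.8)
The plan is to start from the estimate $||x_b||_{_\infty}\le 2\pi\max_{t}||H(t,\cdot)||_{_2}\,||b||_{_2}$ already established in Lemma~\ref{Le1}, and to turn the factor $2\pi\max_t||H(t,\cdot)||_{_2}$ into a quantity that can be evaluated without ever integrating $e^{-2A}$. Writing out $||H(t,\cdot)||_{_2}^{2}=\frac1{2\pi}\int_0^{2\pi}H^2(t,s)\,ds$ from \eqref{kernel} and using that the two indicators $\mathbf 1_{[0,t]}$ and $\mathbf 1_{[t,2\pi]}$ have disjoint supports, so that the cross term vanishes, I would get, with $\lambda=e^{A(2\pi)}$,
\[
\int_0^{2\pi}H^2(t,s)\,ds=\frac{e^{2A(t)}}{(1-\lambda)^2}\left(\int_0^t e^{-2A(s)}\,ds+\lambda^2\int_t^{2\pi}e^{-2A(s)}\,ds\right).
\]
Since $2\pi||H(t,\cdot)||_{_2}=\sqrt{2\pi}\,(\int_0^{2\pi}H^2(t,s)\,ds)^{1/2}$, the whole task reduces to bounding the two integrals of $e^{-2A}$ over $[0,t]$ and $[t,2\pi]$.

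This is exactly where the adequate lower bound $L$ enters. Because $L(s)<A(s)$ for every $s$, one has the pointwise inequality $e^{-2A(s)}<e^{-2L(s)}$, so each of the two integrals is bounded above by the corresponding integral of $e^{-2L}$. On each cell $I_i$ we have $-2L_i(s)=\alpha_i s+\beta_i$, hence $e^{-2L}$ is piecewise of the form $e^{\alpha_i s+\beta_i}$ and can be integrated exactly, the integral over a full cell being precisely $J_i$ (the flat case $\alpha_i=0$ being the obvious limit $e^{\beta_i}(t_{i+1}-t_i)$). In this way the intractable $\int e^{-2A}$ is replaced by explicit finite exponential sums.

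The remaining step is the bookkeeping of the moving endpoint $t$. Fixing the cell $I_m$ that contains $t$, I would split
\[
\int_0^t e^{-2L}=\sum_{i=0}^{m-1}J_i+\frac{e^{\beta_m}}{\alpha_m}\bigl(e^{\alpha_m t}-e^{\alpha_m t_m}\bigr),\qquad
\int_t^{2\pi}e^{-2L}=\frac{e^{\beta_m}}{\alpha_m}\bigl(e^{\alpha_m t_{m+1}}-e^{\alpha_m t}\bigr)+\sum_{i=m+1}^{N-1}J_i,
\]
the two partial pieces over $I_m$ adding up to $J_m$. Inserting these into the bracket above and collecting the weights $1$ and $\lambda^2$ on the appropriate blocks of $J_i$ reproduces, up to the evident indexing, the function $\Psi_m(t)$ of \eqref{psi} as the value of $\int_0^t e^{-2L}+\lambda^2\int_t^{2\pi}e^{-2L}$ on $I_m$; consequently the whole bracket equals $\sum_{m}\Psi_m(t)\mathbf 1_{I_m}(t)$. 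Taking square roots, using $\sqrt{e^{2A(t)}}=e^{A(t)}$ together with the factor $|1-\lambda|$ in the denominator, and finally maximizing over $t\in[0,2\pi]$, I obtain $2\pi\max_t||H(t,\cdot)||_{_2}\le N$, whence $||x_b||_{_\infty}\le N\,||b||_{_2}$. I expect the only delicate point to be this splitting at the moving endpoint $t$ and the correct attachment of the $\lambda^2$ factor to the blocks of $J_i$; the monotonicity $e^{-2A}\le e^{-2L}$ and the exact integration of $e^{-2L}$ are entirely routine.
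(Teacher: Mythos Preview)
Your proof is correct and follows essentially the same route as the paper: start from the bound $2\pi\max_t\|H(t,\cdot)\|_2$ of Lemma~\ref{Le1}, express $\|H(t,\cdot)\|_2^2$ via $G(t)=\int_0^t e^{-2A}+\lambda^2\int_t^{2\pi}e^{-2A}$, replace $A$ by the adequate lower bound $L$, and split the resulting piecewise-exponential integrals at the cell $I_m$ containing $t$ to recover $\Psi_m$. Your parenthetical remark ``up to the evident indexing'' is apt, since the lower limit $i=m-1$ in the second sum of~\eqref{psi} appears to be a typo for $i=m$, as your own computation shows.
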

\begin{proof}

Recall that from Lemma \ref{Le1}, $||x_b||_{_\infty}\leq
M\,||b||_{_2}$, where
\[
M:= 2\pi \max_{t\in[0,2\pi]}||H(t,\cdot)||_{_2}.
\]
So we will find an upper bound of $M.$ Since
$$
H(t,s)=\frac{e^{A(t)}}{1-e^{A(2\pi)}}\left[e^{-A(s)}{\bf
1}_{[0,t]}(s)+ e^{A(2\pi)-A(s)}{\bf 1}_{[t,2\pi]}(s)\right],
$$
it holds that
$$
||H(t,\cdot)||_{_2}=\frac{1}{\sqrt{2\pi}}\frac{e^{A(t)}}{\left|1-\lambda\right|}
\sqrt{G(t)}
$$
where
$$
G(t):=\int_0^t e^{-2A(s)}ds+\lambda^2\int_t^{2\pi} e^{-2A(s)}ds
<\int_0^t e^{-2L(s)}ds+\lambda^2\int_t^{2\pi} e^{-2L(s)}ds,
$$
because $L(t)<A(t)$, for all $t\in[0,2\pi]$.

Assume that $t\in I_m$. Then

\begin{align*}
\int_0^t e^{-2L(s)}ds&= \sum_{i=0}^{m-1} J_i+\int_{t_m}^t e^{-2L_m(s)}ds \\
\int_t^{2\pi} e^{-2L(s)}ds &= \sum_{i=m}^{N-1}
J_i+\int_{t}^{t_{m+1}} e^{-2L_m(s)}ds= \sum_{i=m-1}^{N-1}
J_i-\int_{t_m}^{t} e^{-2L_m(s)}ds.
\end{align*}

Therefore, for $t\in I_m$,
\[
G(t)<\sum_{i=0}^{m-1} J_i+\lambda^2\sum_{i=m-1}^{N-1}
J_i+(1-\lambda^2)\int_{t_m}^t e^{\alpha_ms+\beta_m}ds=\Psi_m(t).
\]
As a consequence, for $t\in[0,2\pi]$,
\[
G(t)<\sum_{m=0}^{N-1} \Psi_m(t){\bf 1}_{I_m}(t),
\]
and the result follows.
\end{proof}

\begin{remark}\label{cotaM} Notice that the above lemma
provides a way for computing a deformation constant where there is
no need of computing integrals. This will be very useful in concrete
application, where the primitive of $e^{-2 A(t)}$ is not computable
and so Corollary~\ref{ccc} is difficult to apply for obtaining $M.$
\end{remark}

In next result, which introduces the constant $K$ appearing in
Theorem~\ref{T1}, ${D}^\circ$ denotes the topological interior of
$D.$

\begin{lemma}\label{Le2} Consider $X$ as in~\eqref{sis}. Let $D$ be a closed
interval and let $\bar x(t)$ be a $2\pi$-periodic
$\mathcal{C}^1$-function, such that $\{\bar x(t)\,:\,t\in\R
\}\subset {D}^\circ$. Define
\begin{equation}\label{R}
R(z,t):=X(\bar{x}(t)+z,t)-X(\bar{x}(t),t) -\frac{\partial}{\partial
x}X(\bar{x}(t),t)z,
\end{equation}
for all $z$ such that $\{\bar{x}(t)+z\,:\,t\in\R\}\subset D$. Then
\begin{itemize}
\item[$(i)$]$|R(z,t)|\leq \frac{K}{2}|z|^2$,\vspace {0.2cm}
\item[$(ii)$]$|R(z,t)-R(\bar{z},t)|\leq
K\max(|z|,|\bar{z}|)\,|z-\bar{z}|,$
\end{itemize}
where
$$
K:=\max_{(x,t)\in D\times [0,2\pi]}\left| \frac{\partial^2}{\partial
x^2}X(x,t)\right|.
$$
\end{lemma}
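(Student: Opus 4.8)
The plan is to recognize that $R(z,t)$ is precisely the remainder of the first--order Taylor expansion of $x\mapsto X(x,t)$ about the point $\bar x(t)$, evaluated at $\bar x(t)+z$. Setting $\varphi(s):=X(\bar x(t)+sz,t)$ and applying Taylor's formula with integral remainder to $\varphi$ on $[0,1]$ gives
\[
R(z,t)=\int_0^1(1-s)\,z^2\,\frac{\partial^2}{\partial x^2}X(\bar x(t)+sz,t)\,ds .
\]
Estimating the integrand and using $\int_0^1(1-s)\,ds=\frac12$ would then yield $(i)$ at once, provided the second derivative is controlled by $K$ along the whole segment of integration.

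The only genuine point to settle before estimating is that every intermediate argument stays inside $D$, so that the very definition of $K$ applies. Since $D$ is a closed interval it is convex, and the hypotheses give $\bar x(t)\in{D}^\circ\subset D$ together with $\bar x(t)+z\in D$; hence $\bar x(t)+sz=(1-s)\bar x(t)+s(\bar x(t)+z)\in D$ for all $s\in[0,1]$, which makes the bound $\left|\frac{\partial^2}{\partial x^2}X(\bar x(t)+sz,t)\right|\le K$ legitimate. The same convexity remark will cover the intermediate points produced in $(ii)$. This is the one place where the structure of $D$ and the range condition on $\bar x(t)+z$ are really used.

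For $(ii)$ I would avoid integrating the squared factor directly and instead apply the mean value theorem twice. First write
\[
R(z,t)-R(\bar z,t)=\big[X(\bar x(t)+z,t)-X(\bar x(t)+\bar z,t)\big]-\frac{\partial}{\partial x}X(\bar x(t),t)\,(z-\bar z),
\]
and use the mean value theorem on the bracket to produce a point $\eta$, with $\eta-\bar x(t)$ lying between $\bar z$ and $z$, so that the difference equals $\big[\frac{\partial}{\partial x}X(\eta,t)-\frac{\partial}{\partial x}X(\bar x(t),t)\big](z-\bar z)$. A second application of the mean value theorem to $\frac{\partial}{\partial x}X(\cdot,t)$ gives an intermediate $\zeta\in D$ with $\frac{\partial}{\partial x}X(\eta,t)-\frac{\partial}{\partial x}X(\bar x(t),t)=\frac{\partial^2}{\partial x^2}X(\zeta,t)\,(\eta-\bar x(t))$. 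Bounding $\left|\frac{\partial^2}{\partial x^2}X(\zeta,t)\right|\le K$ and noting $|\eta-\bar x(t)|\le\max(|z|,|\bar z|)$ then produces $(ii)$.

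Since $X$ is $\mathcal{C}^2$ and the estimates are entirely classical, I do not expect a real analytic obstacle. The main thing to be careful about is the domain bookkeeping described above, namely keeping every evaluation point inside $D$; a secondary point is tracking the constant $\frac12$ in $(i)$ against its absence in $(ii)$, the discrepancy arising because $(ii)$ effectively differentiates once more and so loses one order in $z$.
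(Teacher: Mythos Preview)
Your proposal is correct and follows essentially the same route as the paper. For $(i)$ the paper uses Taylor's formula with Lagrange remainder rather than integral remainder, and for $(ii)$ it applies the mean value theorem twice to $z\mapsto R(z,t)$ (using $\partial_z R(0,t)=0$) instead of unpacking the difference directly in terms of $X$; these are only cosmetic variations of the same argument, and your explicit attention to the convexity of $D$ is a welcome clarification that the paper leaves implicit.
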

\begin{proof}
$(i)$. By using the Taylor's formula, for each $t$ it holds that
$$
X(\bar{x}(t)+z,t)=X(\bar{x}(t),t)+\frac{\partial}{\partial
x}X(\bar{x}(t),t)z+\frac{1}{2}\frac{\partial^2}{\partial
x^2}X(\xi(t),t)z^2
$$
for some $\xi(t)\in\langle\bar{x}(t),\bar{x}(t)+z\rangle$. Therefore
$$
|R(z,t)|=\left|\frac12\frac{\partial^2 }{\partial
x^2}X(\xi(t),t)\right||z|^2\leq \frac{K}{2}|z|^2,
$$
as we wanted to prove.

\smallskip

\noindent $(ii)$. From Rolle's Theorem for each fixed $t$ it follows
that there exists $\eta(t)\in\langle z,\bar{z}\rangle$ such that
\begin{equation*}\label{Roll}
\left|R(z,t)-R(\bar{z},t)\right|\leq \left|\frac{\partial}{\partial
z}R(\eta(t),t)\right||z-\bar{z}|.
\end{equation*}
Applying again this theorem, but now to $\frac{\partial}{\partial
z}R$, noticing that $\left.\frac{\partial}{\partial
z}R(z,t)\right|_{z=0}=0$, we obtain that
\begin{equation*}\label{Roll2}
\left| \frac{\partial}{\partial z} R(\eta(t),t)\right|\leq
\left|\frac{\partial^2}{\partial z^2}R(\omega(t),t)\right||\eta(t)|
=\left|\frac{\partial^2}{\partial
x^2}X(\omega(t),t)\right||\eta(t)|\le K|\eta(t)|,
\end{equation*}
where $\omega(t)\in\langle 0,\eta(t)\rangle$. Note also that
\begin{equation*}\label{desi}
|\eta(t)|\leq\max(|z|,|\bar{z}|).
\end{equation*}
Hence, the result follows combining the three inequalities.
\end{proof}

\subsection{The Harmonic Balance method}\label{bal} In this subsection we recall the HBM adapted to
the setting of one-dimensional $2\pi$-periodic non-autonomous
differential equations.

We are interested in finding periodic solutions of the
$2\pi$-periodic differential equation \eqref{sis}, or equivalently,
periodic functions which satisfy the following functional equation
\begin{equation}\label{fe}
\mathcal{F}(x(t)):=x'(t)-X(x(t),t)=0.
\end{equation}

Recall that any smooth $2\pi$-periodic function $x(t)$ can be
written as its Fourier series,
$$
x(t)= \frac{a_0}2+\sum_{m=1}^{\infty} \left(a_m \cos(m t)+ b_m\sin(m
t)\right),
$$
where
\[
a_{m}=\frac{1}{\pi}\int_0^{2\pi}x(t)\cos(mt)\,dt,\quad\mbox{and}\quad
b_{m}=\frac{1}{\pi}\int_0^{2\pi}x(t)\sin(mt)\,dt,
\]
for all $m\ge0.$ Hence it is natural to try to approach the periodic
solutions of the functional equation \eqref{fe} by using truncated
Fourier series, {\it i.e.} trigonometric polynomials.

Let us describe the HBM of order $N$. Consider a trigonometric
polynomial
$$
y_{_N}(t)=\frac{r_0}2+\sum_{m=1}^{N} \left(r_m \cos(m t)+ s_m\sin(m
t)\right),
$$
with unknowns $r_m=r_m(N),s_m=s_m(N)$ for all $m\le N$. Then compute
the $2\pi$-periodic function $\mathcal{F}(y_{_N}(t))$. It has also
an associated Fourier series
\[
\mathcal{F}(y_{_N}(t))=\frac{\mathcal{A}_0}2+\sum_{m=1}^{\infty}
\left(\mathcal{A}_m \cos(m t)+ \mathcal{B}_m\sin(m t)\right),
\]
where $\mathcal{A}_m=\mathcal{A}_m({\bf r},{\bf s})$ and
$\mathcal{B}_m=\mathcal{B}_m({\bf r},{\bf s})$, $m\ge0,$ with ${\bf
r}=(r_0,r_1,\ldots,r_{_N})$ and ${\bf s}=(s_1,\ldots,s_{_N})$. The
HBM consists in finding values ${\bf r}$ and ${\bf s}$ such that
\begin{equation}\label{nl}
\mathcal{A}_m({\bf r},{\bf
s})=0\quad\mbox{and}\quad\mathcal{B}_m({\bf r},{\bf s})=0\quad
\mbox{for}\quad 0\le m\le N.\end{equation} The above set of
equations is usually a very difficult non-linear system of equations
and for this reason in many works, see for instance \cite{Mi} and
the references therein, only small values of $N$ are considered. We
also remark that in general the coefficients of $y_{_N}(t)$ and
$y_{_{N+1}}(t)$ do not coincide at all.

Notice that equations \eqref{nl} are equivalent to
\[
\int_0^{2\pi} \mathcal{F}(y_{_N}(t))\cos
(mt)\,dt=0\quad\mbox{and}\quad \int_0^{2\pi}
\mathcal{F}(y_{_N}(t))\sin (mt)\,dt=0,
\]
for $0\le m\le N.$

The hope of the method is that the trigonometric polynomials found
using this approach are ``near" actual periodic solutions of the
differential equation \eqref{sis}. In any case, as far as we know,
the BHM for $N$ small is only a heuristic method that sometimes
works quite well.

To end this subsection, we want to comment a main difference between
the non-autonomous case treated here and the autonomous one. In this
second situation the periods of the searched periodic orbits, or
equivalently their frequencies, are also treated as unknowns. Then
the methods works similarly, see again \cite{Mi}.

\section{Proof of the main result}

\begin{proof}[Proof of Theorem~\ref{T1}]

As a first step we prove the following result: consider the
nonlinear differential equation
\begin{equation}\label{zp}
{z}'=X(z+\bar{x}(t),t)-X(\bar{x}(t),t)-s(t),
\end{equation}
where $s(t)$ is given in~\eqref{ss}. Then a $2\pi$-periodic function
$z(t)$ is a solution of~\eqref{zp} if and only if $z(t)+\bar{x}(t)$
is a $2\pi$-periodic solution of \eqref{sis}.

This is a consequence of the following equalities
\begin{align*}
(z(t)+\bar x(t))'=&[X(z(t)+\bar{x}(t),t)-X(\bar{x}(t),t)-s(t)]
+[X(\bar{x}(t),t)+s(t)]\\
=&X(z(t)+\bar{x}(t),t).
\end{align*}

By using the function
\[
R(z,t)=X(z+\bar{x}(t),t)-X(\bar{x}(t),t)-\frac{\partial}{\partial
x}X(\bar{x}(t),t)z,
\]
introduced in Lemma~\ref{Le2}, equation~\eqref{zp} can be written as
\begin{equation}\label{zp2}
{z}'=\frac{\partial}{\partial x}X(\bar{x}(t),t)z+R(z,t)-s(t).
\end{equation}

Let ${\mathcal P}$ be the space of $2\pi$-periodic
$\mathcal{C}^0$-functions. To prove the first part of the theorem it
suffices to see that equation~\eqref{zp2} has a unique
$\mathcal{C}^1$, $2\pi$-periodic solution $z^*(t)$, which belongs to
the set
$$\mathcal{N}=\{z\in\mathcal{P}: ||z||_{_\infty}\leq 2MS\}.$$

To prove this last assertion we will construct a contractive map
$T:\mathcal{N}\rightarrow\mathcal{N}$. Because $\mathcal{N}$ is a complete space
with the $||\cdot||_{_\infty}$ norm, its fixed point will be a continuous function
in $\mathcal{N}$ that will satisfy an integral equation, equivalent
to~\eqref{zp2}. Finally we will see that this fixed point is in fact a
$\mathcal{C}^1$ function and that it satisfies equation~\eqref{zp2}.

Let us define $T$. If $z\in\mathcal{N}$ then $T(z)$ is defined as
the unique $2\pi$-periodic solution of the linear differential
equation
$$
{y}'=\frac{\partial}{\partial x}X(\bar{x}(t),t)y+R(z(t),t)-s(t).
$$
Notice that this map is well defined, by Lemma~\ref{Le1}, because
$\bar{x}(t)$ is noncritical w.r.t. equation~\eqref{sis}. Then $z_1$
satisfies
$$
{z}'_1=\frac{\partial}{\partial
x}X(\bar{x}(t),t)z_1+R(z(t),t)-s(t).$$

Let us prove that $T$ maps $\mathcal{N}$ into $\mathcal{N}$ and that
it is a contraction. By Lemmas \ref{Le1} and \ref{Le2} and the
hypotheses of the theorem
\begin{align*}
||T(z)||_{_\infty}=||z_1||_{_\infty} &\leq
M||R(z(\cdot),\cdot)-s(\cdot)||_{_2} \leq
M\left(||R(z(\cdot),\cdot)||_{_2}+S\right)\\
&\leq M(||R(z(\cdot),\cdot)||_{_\infty}+S) \leq
M(\frac{K}{2}||z||_{_\infty}^2+S)\\
&\leq M(2KM^2S^2+S)< 2MS,
\end{align*}
where we have used in the last inequality that $2M^2KS<1$.

To show that $T$ is a contraction on $\mathcal{N}$, take
$z,\bar{z}\in\mathcal{N}$ and denote by $z_1=T(z)$,
$\bar{z}_1=T(\bar{z})$. Then
\begin{align*}
{z}'_1&=\frac{\partial}{\partial x}X(\bar{x}(t),t)z_1+R(z(t),t)-s(t),\\
{\bar{z}}'_1&=\frac{\partial}{\partial
x}X(\bar{x}(t),t)\bar{z}_1+R(\bar{z}(t),t)-s(t).
\end{align*}
Therefore
$$
(z_1-\bar{z}_1)'=\frac{\partial}{\partial
x}X({\bar{x}}(t),t)(z_1-\bar{z}_1)+R(z(t),t)-R(\bar{z}(t),t).
$$
Again by Lemmas \ref{Le1} and \ref{Le2} and the hypotheses of the
theorem,
\begin{align*}
||T(z)-T(\bar{z})||_{_\infty}=&||z_1-\bar{z}_1||_{_\infty}\leq
M||R(z(\cdot),\cdot)-R(\bar{z}(\cdot),\cdot)||_{_\infty}\\
\le&
MK\max(||z||_{_\infty},||\bar{z}||_{_\infty})||z-\bar{z}||_{_\infty}\leq
2M^2KS||z-\bar{z}||_{_\infty},
\end{align*}
as we wanted to prove, because recall that $2M^2KS<1$.

Therefore the sequence of functions $\{z_n(t)\}$ defined as
$$
{z}'_{n+1}(t)=\frac{\partial}{\partial
x}X(\bar{x}(t),t)z_{n+1}(t)+R(z_n(t),t)-s(t),$$ with any
$z_0(t)\in\mathcal{N}$, and $z_{n+1}(t)$ chosen to be periodic,
converges uniformly to some function $x^*(t)\in\mathcal{N}$. In fact
we also have that
$$
{z}_{n+1}(t)=z_{n+1}(0)+\int_0^t \left(\frac{\partial}{\partial
x}X(\bar{x}(w),w)z_{n+1}(w)+R(z_n(w),w)-s(w)\right)\,dw.$$ Therefore
\[
x^*(t)=x^*(0)+\int_0^t \left(\frac{\partial}{\partial
x}X(\bar{x}(w),w)x^*(w)+R(x^*(w),w)-s(w)\right)\,dw.
\]
We know that $x^*(t)$ is a continuous function, but from the above
expression we obtain that it is indeed of class~ $\mathcal{C}^1$.
Therefore $x^*(t)$ is a periodic solution of~\eqref{zp2} and is the
only one in $\mathcal N$, as we wanted to see.

To prove the hyperbolicity of $x^*(t)$ it suffices to show that
$$
\int_0^{2\pi}\frac{\partial}{\partial x}X(x^*(t),t)dt\neq 0,
$$
and study its sign, see \cite{Lloyd}. We have that, fixed $t$,
$$
\frac{\partial}{\partial x}X(x^*(t),t)=\frac{\partial}{\partial
x}X(\bar{x}(t),t)+\frac{\partial^2}{\partial
x^2}X(\xi(t),t)(x^*(t)-\bar{x}(t)),
$$
for some $\xi(t)\in\langle x^*(t),\bar{x}(t)\rangle$. Therefore,
since we have already proved that $|x^*(t)-\bar x(t)|<2MS$,
$$
\left|\frac{\partial}{\partial
x}X(\bar{x}(t),t)-\frac{\partial}{\partial
x}X(x^*(t),t)\right|\leq2KMS.
$$
Then
$$\left|\int_0^{2\pi}\frac{\partial}{\partial
x}X(\bar{x}(t),t)dt-\int_0^{2\pi}\frac{\partial}{\partial
x}X(x^*(t),t)dt\right|\leq 4\pi KMS<\frac{2\pi}{M}
$$
and the results follows because by hypothesis the first integral is,
in absolute value, bigger that $2\pi/M.$
\end{proof}

\section{Applications}\label{exx}

In this section we apply our result to prove the existence and
localize a hyperbolic limit cycle of some planar systems, which
after some transformations can be converted into differential
equations of the form~\eqref{sis}. In the first case, although we
know explicitly the limit cycle, we first use the HBM to approximate
it and then Theorem~\ref{T1} to prove in an alternative way its
existence. In the second case we consider a planar rigid system.
First, we found numerically an approximation of the limit cycle and
from this approximation we propose a truncated Fourier series as a
simpler approximation. Finally, Theorem~\ref{T1} is used again to
prove the existence and localize the limit cycle.

\subsection{A simple integrable case}\label{ss1}
Consider the planar ordinary differential equation
\begin{equation}\label{ex1}
\begin{array}{lll}
\dot x&=&-y+x(a+dx^2+exy+fy^2)\\
\dot y&=&x+y(a+dx^2+exy+fy^2)
\end{array}
\end{equation}
In polar coordinates it writes as
$$\dot r=ar+(d\cos^2(\theta)+e\sin(\theta)\cos(\theta)+f\sin^2(\theta))r^3,
\quad \dot\theta=1,$$ or equivalently,
$$
r'=\frac{dr}{dt}=ar+(d\cos^2(t)+e\sin(t)\cos(t)+f\sin^2(t))r^3:=X(r,t),
$$
where we have renamed $\theta$ as $t.$ The above equation is a Bernoulli equation
that can be solved explicitly. For simplicity we fix $a=-1$, $d=3$, $e=2$ and
$f=1$. Then we have the equation
\begin{equation}\label{1a}
\dot{r}=-r+(\cos(2t)+\sin(2t)+2)r^3.
\end{equation}
Its solutions are $r(t)\equiv 0$ and
$$
r(t)=\pm\frac{1}{\sqrt{2+\cos(2t)+ke^{2t}}}.
$$
Therefore its unique positive periodic solution, which corresponds
to the only limit cycle of \eqref{ex1} for the given values of the
parameters, is given by the ellipse
\begin{equation}
r^*(t)=\frac{1}{\sqrt{2+\cos(2t)}}. \label{solr}
\end{equation}
Moreover since
$$
\int_0^{2\pi}\frac{\partial}{\partial r}X(r^*(t),t)\,dt=4\pi>0
$$
it is hyperbolic and unstable, see \cite{Lloyd}. Its Fourier series
is
\begin{equation}
r^*(t)=\frac{a_0}{2}+\sum_{k=1}^{\infty}
a_{2k}\cos(2t),\label{seFou}
\end{equation}
where
$$
\begin{array}{lll}
a_0&=&\frac{4 K}{\sqrt3\pi} \approx
1.491498374, \quad a_0/2\approx 0.745749187, \\\\
a_2&=&\frac{12 E -8 K }{\sqrt3\pi}\approx
-0.2016837219,\\\\
a_4&=&\frac{-32 E +20 K }{\sqrt3\pi}\approx
0.04065713288,\\\\
a_6&=&\frac{476 E -296 K }{\sqrt3\pi}\approx
-0.009092598292,\\\\
a_8&=&\frac{-10624 E +6604 K }{\sqrt3\pi}\approx
0.002133790322,\\\\
a_{10}&=&\frac{105548 E -65608 K }{\sqrt3\pi}\approx
-0.0005148662408,
\end{array}
$$
being $K=K(\sqrt 6/3)$ and $E=E(\sqrt 6/3)$ the complete elliptic
integrals of the first and second kind respectively, see \cite{BF}.

Let us forget that we know the exact solution and its full Fourier
series to illustrate how to use the HBM and Theorem~\ref{T1} for
equation~\eqref{1a} to obtain an approach to the actual periodic
solution~\eqref{solr}.

Following the HBM, see subsection \ref{bal}, consider the equation
\begin{equation}\label{11a}
\mathcal{F}(r(t))=r'(t)+r(t)-(\cos(2t)+\sin(2t)+2)r^3(t)=0,
\end{equation}
which is clearly equivalent to~\eqref{1a}.

Searching for a solution of the form $r(t)=r_0$ and imposing that
the first harmonic of $\mathcal{F}(r(t))$ vanishes we get that
$r_0+2r_0^3=0.$ The only positive solution of the equation is
$r_0=\sqrt{2}/2\approx0.7071$ and this is the first order solution
given by HBM.

Motivated by the symmetries of \eqref{1a} for applying the second
order HBM we search for an approximation of the form
$$
r(t)=r_0+r_2\cos(2t).
$$
The vanishing of the coefficients of 1 and $\cos(2t)$ in the Fourier
series of $\mathcal{F}(r(t))$ give the non-linear system:
\begin{align*}
g(r_0,r_2)&:=r_0-2r_0^3-\frac{3}{2}r_2r_0^2-3r_2^2r_0-\frac{3}{8}r_2^3=0,\\
h(r_0,r_2)&:=r_2-r_0^3-6r_2r_0^2-\frac{9}{4}r_2^2r_0-\frac{3}{2}r_2^3=0.
\end{align*}
Doing the resultants $\operatorname{Res}(g,h,r_0)$,
$\operatorname{Res}(g,h,r_2)$ we obtain that the solutions of the
above system are also solutions of
\begin{align*}
&219720r_0^8-18852r_0^6+4269r_0^4-328r_0^2+8=0,\\
&49437r_2^8-70956r_2^6+30708r_2^4-4288r_2^2+128=0.
\end{align*}
One of its solutions is $r_0\approx 0.7440456581=:\tilde{r}_0$,
$r_2\approx -0.2013905597=:\tilde{r}_2$.

To know the accuracy of the periodic function
$\tilde{r}(t)=\tilde{r}_0+\tilde{r}_2\cos(2t)$ as a solution
of~\eqref{1a} we compute
$$
\widetilde{S}=||\tilde{r}'(t)+\tilde{r}(t)-(2+\sin(2t)+
\cos(2t))\tilde{r}(t)^3||_{_2}\approx 0.1361
$$
Since it is enough for our purposes we can consider simpler rational
approximations of $\tilde{r}_0$ and $\tilde{r}_1$, but keeping a
similar accuracy. For finding these rational approximations, we
search them doing the continuous fraction expansion of these values.
For instance
$$
\tilde{r}_0=[0,1,2,1,9,1,21,17,3,11]
$$
giving the convergents $1$, $2/3$, $3/4$, $29/39$, $32/43$,$\ldots$.
Similarly $\tilde{r}_2$ gives $1/4$, $1/5$, $28/139$,
$29/144$,$\ldots$. At this point we have the following new candidate
to be an approximation of the periodic solution
$$
\bar{r}(t)=\frac{3}{4}-\frac{1}{5}\cos(2t).
$$
Its accuracy w.r.t. equation~\eqref{1a} is \[
S=||\bar{r}'(t)+\bar{r}(t)-(2+\sin(2t)+
\cos(2t))\bar{r}(t)^3||_{_2}= \frac{\sqrt{50069}}{1600}\approx
0.1398<0.14,\] and so, quite similar to the one of $\tilde{r}(t).$

Therefore $\tilde{r}(t)$ and $\bar{r}(t)$ are solutions of
\eqref{1a} with similar accuracy so we keep $\bar{r}(t)$ as the
second order approximation given by this modification of the HBM.
For this $\bar{r}(t)$ we already know that its accuracy is $S=0.14$.

We need to know the value of $M$ given in Theorem \ref{T1}. With
this aim we will apply Lemma \ref{cota}. We consider in that lemma a
function $L(t)$ formed by $13$ straight lines and $\ell=1/9$. Then
we get that we can take $M=2.3$. Therefore, since $2MS=0.644$ and
$0.55=\frac{11}{20}\leq\bar{r}(t)\leq\frac{19}{20}=0.95$.

\smallskip

We have that $I=[-0.094,1.594]$ in Theorem \ref{T1}. Moreover
$$
\left|\frac{\partial^2}{\partial r^2}X(r,t)\right|\leq
6|2+\sin(2t)+\cos(2t)||r|\leq (12+6\sqrt{2})|r|\leq \frac{41}{2}|r|
$$
Thus taking $K=\frac{41}{2}(1.594)\approx 32.68$ we get that
$2M^2KS\approx 48.4>1$ and we can not apply Theorem \ref{T1}.

\smallskip

Doing similar computations with the successive approaches given by
the HBM we obtain
$$
\begin{array}{lll}
\bar{r}(t)&=&\frac{3}{4}-\frac{1}{5}\cos(2t)+\frac{1}{25}\cos(4t),\\\\
\bar{r}(t)&=&\frac{3}{4}-\frac{1}{5}\cos(2t)+\frac{1}{25}\cos(4t)-
\frac{1}{110}\cos(6t).
\end{array}
$$
It is worth to comment that the above two functions are periodic
functions that approximate to solution of \eqref{1a} with accuracies
$0.045$ and $0.018$, respectively, while the solutions obtained
solving approximately the non-linear systems with ten significative
digits have similar accuracies, namely $0.043$ and $0.013$,
respectively. For none of both approaches Theorem \ref{T1} applies.
Let us see that the next order HBM works for this example.

If we do all the computations we obtain the candidate to be solution
$$
\tilde{r}(t)=\sum_{k=0}^4 r_{2k}\cos(2kt),
$$
with
\begin{align*}
r_0=&\,\,0.7457489122,\qquad\quad\,\, r_2=-0.2016836610,\quad r_4=0.04065712547,\\
r_6=&-0.009092599917,\quad\,\, r_8=0.002133823488.
\end{align*}
Computing the accuracy of $\tilde{r}(t)$ we obtain that it is
$0.0039$. If we take the approximation, using some convergents of
$r_{2k}$,
$$
\bar{r}(t)=\frac{3}{4}-\frac{1}{5}\cos(2t)+
\frac{1}{25}\cos(4t)-\frac{1}{110}\cos(6t)+\frac{1}{468}\cos(8t)
$$
it has accuracy $0.0125$. This means that we have lost significative
digits and we need to take convergents of $r_{2k}$ that have at
least 3 significative digits. For instance some convergents of $r_0$
are $1$, $2/3$, $3/4$, $41/55$, $44/59$,$\ldots$ and we choose
$44/59$. Finally we consider
\begin{equation}\label{af}
\bar{r}(t)=\frac{44}{59}-\frac{24}{119}\cos(2t)+
\frac{2}{49}\cos(4t)-\frac{1}{110}\cos(6t)+\frac{1}{468}\cos(8t).
\end{equation}
The accuracy of $\bar{r}$ is $0.00394$ quite similar to the one of
$\tilde{r}(t)$. So we take $S=0.004$. Let us see that Theorem
\ref{T1} applies if we take this approximate periodic solution.

In this case, by applying Lemma \ref{cota}, using the piecewise
linear function $L$ formed by $10$ pieces and $\ell=1/10$, we obtain
that we can take $M=2.4.$

\smallskip

Since it can be seen that $0.5\leq\bar{r}(t)\leq 1$ and $2MS=0.0192$
we can take in Theorem \ref{T1} the interval $I:=[0.4808, 1.0192]$.

Then
$$
\max_{I\times[0,2\pi]}\left|\frac{\partial^2}{\partial
r^2}X(r,t)\right|\leq \frac{41}{2}(1.02)=20.91=:K.
$$
Finally, $2M^2KS\approx 0.96<1$ and Theorem \ref{T1} applies.

Finally, it is easy to see that
$$
\int_0^{2\pi}\frac{\partial}{\partial r}X(\bar{r}(t),t)dt>12.5,
$$
which is bigger than $2\pi/M \approx 2.6$. Therefore the
hyperbolicity of the periodic orbit given by Theorem~\ref{T1}
follows. In short we have proved,
\begin{proposition}
Consider the periodic function $\bar{r}(t)$ given in \eqref{af}.
Then there is a periodic solution $r^*(t)$ of \eqref{1a}, such that
$$
||\bar{r}-r^*||_{_\infty}\leq 0.0192,
$$
which is hyperbolic and unstable and it is the only periodic
solution of \eqref{1a} in this strip.
\end{proposition}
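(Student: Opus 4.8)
The plan is to obtain the conclusion as a direct application of Theorem~\ref{T1} to equation~\eqref{1a} with the specific candidate $\bar{r}(t)$ displayed in~\eqref{af}. Since $\bar r$ is a trigonometric polynomial it is automatically $2\pi$-periodic and of class $\mathcal{C}^1$, so the whole task reduces to producing the three constants $S$, $M$ and $K$ of the theorem, checking the inequality $2M^2KS<1$, and finally verifying the integral condition that yields hyperbolicity and fixes the stability.

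First I would compute the accuracy $S=||s||_{_2}$, where $s(t)=\bar r'(t)-X(\bar r(t),t)$ and $X(r,t)=-r+(\cos(2t)+\sin(2t)+2)r^3$. Because $\bar r$ is a trigonometric polynomial and $X$ is polynomial in $r$, the residual $s(t)$ is again a trigonometric polynomial, so its $||\cdot||_{_2}$ norm can be read off from its Fourier coefficients via Parseval's theorem without any genuine integration; this gives $S\approx 0.004$, and I would safely round up to $S=0.004$. Next I would bound $K=\max_{I\times[0,2\pi]}|\frac{\partial^2}{\partial r^2}X|$. Here $\frac{\partial^2}{\partial r^2}X=6(\cos(2t)+\sin(2t)+2)r$, so $|\frac{\partial^2}{\partial r^2}X|\le (12+6\sqrt2)|r|\le \tfrac{41}{2}|r|$; evaluating at the right endpoint of the strip and using $2MS=0.0192$ together with the elementary bound $0.5\le\bar r(t)\le 1$ gives $I=[0.4808,1.0192]$ and $K=\tfrac{41}{2}\cdot 1.0192\approx 20.91$. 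These two steps are essentially routine manipulations of trigonometric polynomials and elementary estimates.

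The genuinely delicate point is the deformation constant $M$, and here I would lean on Lemma~\ref{cota} rather than on the integral formula of Corollary~\ref{ccc}, precisely because the primitive of $e^{-2A(t)}$ is not available in closed form. Concretely, I would set $A(t)=\int_0^t \frac{\partial}{\partial r}X(\bar r(s),s)\,ds$, construct an \emph{adequate} piecewise-linear lower bound $L$ of $A$ on a uniform partition (the choice of $10$ linear pieces with vertical offset $\ell=1/10$ suffices), form the auxiliary functions $\Psi_m$ of~\eqref{psi}, and maximize the resulting expression for the bound $N$ of Lemma~\ref{cota} over $t\in[0,2\pi]$. This produces the admissible value $M=2.4$. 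The only thing one must be careful about is to verify that $L$ really stays strictly below $A$ on all of $[0,2\pi]$, so that the inequality $G(t)<\sum_m\Psi_m(t){\bf 1}_{I_m}(t)$ in the proof of Lemma~\ref{cota} is legitimate; this is the step I expect to be the main obstacle, since it requires controlling the piecewise-linear interpolation error of $A$ and is where the freedom in choosing the partition and $\ell$ is spent.

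Finally, with $M=2.4$, $S=0.004$, $K=20.91$ in hand, I would check $2M^2KS\approx 0.96<1$, so the first part of Theorem~\ref{T1} yields a $2\pi$-periodic solution $r^*$ of~\eqref{1a} with $||\bar r-r^*||_{_\infty}\le 2MS=0.0192$, unique among the periodic solutions lying in the strip $I$. For the last claim I would compute the integral $\int_0^{2\pi}\frac{\partial}{\partial r}X(\bar r(t),t)\,dt$ (again a finite, elementary computation since the integrand is a trigonometric polynomial) and check that it exceeds $12.5>2\pi/M\approx 2.6$; being positive, it forces, by the hyperbolicity criterion of Theorem~\ref{T1}, that $r^*$ is hyperbolic and unstable. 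This completes the argument.
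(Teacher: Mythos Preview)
Your proposal is correct and follows essentially the same route as the paper: compute $S=0.004$ via Parseval, obtain $M=2.4$ from Lemma~\ref{cota} with a $10$-piece piecewise-linear lower bound and $\ell=1/10$, take $I=[0.4808,1.0192]$ and $K=\tfrac{41}{2}\cdot 1.02\approx 20.91$, verify $2M^2KS\approx 0.96<1$, and then check $\int_0^{2\pi}\frac{\partial}{\partial r}X(\bar r(t),t)\,dt>12.5>2\pi/M$ to conclude hyperbolicity and instability. The numerical constants, the choice of auxiliary lemma for $M$, and the order of the verifications all coincide with the paper's argument.
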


\begin{remark}
Using the known analytic expression of $r^*(t)$ it can be seen that
indeed
$$
||\bar{r}-r^*||_{_\infty}\leq 0.0007.
$$
\end{remark}

Notice that by using a high enough HBM we have obtained a proof of
the existence of a hyperbolic periodic orbit and an effective
approximation $\bar{r}(t)$ without integrating the differential
equation.

\subsection{A rigid cubic system}\label{ss2}
In this section we study some concrete cases of the family of rigid
cubic systems
\begin{equation}\label{ex2f}
\begin{array}{lll}
\dot x&=&-y-x(a+bx+x^2),\\
\dot y&=&\phantom{-}x-y(a+bx+x^2),
\end{array}
\end{equation}
already considered in~\cite{GaPrTo}. In that paper it is proved that
\eqref{ex2f} has at most one limit cycle and when it exists is
hyperbolic. With our point of view we will find an explicit
approximation of the limit cycle, see Proposition~\ref{ppp}. We
consider the case $a=-b=1/10$, that in polar coordinates writes
as~\eqref{2aa},
\begin{equation*}
r'=\frac{dr}{dt}=\frac{1}{10}\,r-\frac{1}{10}\cos(t)\,r^2
-\cos^2(t)\,r^3,
\end{equation*}
and we start explaining how we have found the approximation of the
periodic solution of~\eqref{2aa} given in Proposition~\ref{ppp}.

\smallskip

\noindent{\bf First attempt: the HBM}. First we try to apply this
method to find an approximation of the periodic solution of
\eqref{2aa} that allows to use Theorem~\ref{T1}.

Searching for a solution of the form $r(t)=r_0$ and imposing that
the first harmonic of
$$
\frac{1}{2}r_0^3-\frac{1}{10}r_0+\frac{1}{10}\cos(t) r_0^2
+\frac{1}{2}\cos(2t)r_0^3
$$
vanishes we obtain that
$$\frac{1}{2}r_0\left(r_0^2-\frac{1}{5}\right)=0.$$
Hence $r_0=\sqrt{5}/5\approx 0.4472135954$ is the first order
solution given by the HBM. We obtain that the positive approximate
solution is $r=\sqrt{5}/5$. For applying the second order HBM we
search for an approximation of the form
$$
r(t)=r_0+r_1\cos(t)+s_1\sin(t).
$$
The vanishing of the coefficients of 1, $\cos(t)$ and $\sin(t)$ in
$\mathcal{F}(r(t))$ provides the non-linear system
\begin{align*}
&\frac{9}{4}r_0^2r_1-\frac{5}{8}r_1^3+\frac{3}{8}r_1s_1^2
+\frac{1}{10}r_0^2+\frac{3}{40}r_1^2+\frac{1}{40}s_1^2-\frac{1}{10}
r_1=0,\\
&\frac{3}{4}r_0^2s_1+\frac{3}{8}r_1^2s_1+\frac{1}{8}s_1^3
+\frac{1}{20}r_1s_1-\frac{1}{10}s_1-r_1=0,\\
&\frac{1}{2}r_0^3+\frac{9}{8}r_0r_1^2-\frac{1}{10}r_0
+\frac{3}{8}r_0s_1^2+\frac{1}{10}r_0r_1=0.
\end{align*}
By using the same tools than in the previous example we obtain that
one of the approximated solutions of the above system is $r_0\approx
0.4471066159$, $r_1\approx -0.0009814101$ and $s_1\approx
-0.0196567414$. We search simple rational approximations of $r_0$,
$r_1$ and $s_1$, doing again the respective continuous fraction
expansions and we obtain the candidate
$$
\tilde{r}(t)=\frac{1}{2}-\frac{1}{1018}\cos(t)-\frac{1}{50}\sin(t),
$$
to be an approximate periodic solution of~\eqref{2aa}. It can be
seen that it has accuracy $\widetilde{S}\approx 0.046$. Doing all
the computations needed to apply Theorem~\ref{T1} we obtain that we
are not under its hypotheses. Therefore we need to continue with the
HBM of second order.

Doing the second order approach we obtain five algebraic polynomial
equations, that we omit for the sake of simplicity. Unfortunately,
neither using the resultant method as in the previous cases, nor
using the more sophisticated tool of Gr\"{o}bner basis, our
computers are able to obtain an approximate solution to start our
theoretical analysis.

\smallskip

\noindent {\bf A numerical approach.} First, we search a numerical
solution of \eqref{2aa} by using the Taylor series method. From this
approximation we compute, again numerically, its first Fourier
terms, obtaining
$$
\tilde{r}(t)=\sum_{k=0}^3 r_{k}\cos(kt)+s_{k}\sin(kt),
$$
where
\begin{align*}
r_0=&\,\,0.4483561517,\qquad r_1=-0.0024133439,\quad s_1=-0.0193837572,\\
r_2=&-0.0037463296,\quad
s_2=-0.0220176517,\\r_3=&-0.0012390886,\quad s_3=0.0003784656.
\end{align*}
The accuracy of $\tilde{r}(t)$ is $0.00289$. If we take a new nicer
approximation, using again some convergents of $r_{k}$ and $s_k$, we
obtain {\small\begin{equation}\label{af2}
\bar{r}(t)=\frac{4}{9}-\frac{1}{693}\cos(t)-\frac{1}{51}\sin(t)
-\frac{1}{653}\cos(2t)-\frac{1}{45}\sin(2t)-\frac{1}{780}\cos(3t),
\end{equation}}
\!with accuracy $0.00298,$ quite similar to the one of
$\bar{r}_1(t)$. Note that~\eqref{af2} is precisely the approximation
of the periodic solution of~\eqref{2aa} stated in
Proposition~\ref{ppp}.

\begin{proof}[Proof of Proposition~\ref{ppp}] We already know that the accuracy of
$\bar r(t)$ is $S:=0.003.$ To apply Theorem~\ref{T1} we will compute
$M$ and~$K$.

First we calculate $A(t)=\int_0^t{\frac{\partial}{\partial
r}X(\bar{r}(t),t)}.$
$$
\begin{array}{ll}
A(t)=&\frac{2891685439}{72733752000}-\frac{347888350813299559}{1778094556332494400}t
-\frac{561179}{36756720}\cos(t)-\frac{685338551}{8000712720}\sin(t)\\\\&
-\frac{757058717}{48004276320}\cos(2t)
-\frac{40221206418131}{273447836421760}\sin(2t)
-\frac{2923231}{576974475}\cos(3t)\\\\&+\frac{37724429}{36003207240}\sin(3t)
-\frac{353400139}{96008552640}\cos(4t)
+\frac{17671001708653999}{42674269351979865600}\sin(4t)\\\\&
+\frac{5358811}{300026727000}\cos(5t)
+\frac{4708003}{20001781800}\sin(5t)
+\frac{1537}{207810720}\cos(6t)\\\\&+\frac{43551971479}{1438264594166400}\sin(6t)
+\frac{1}{327600}\cos(7t)
-\frac{1}{4753840}\sin(7t)\\\\&-\frac{1}{12979200}\sin(8t).
\end{array}
$$

Now, by using again Lemma~\ref{cota}, we find a deformation constant
$M$. In this case we use as lower bound for $A$ the piecewise
function $L$ formed by $7$ straight lines and $\ell=1/18$. We obtain
that we can take $M=7$. Therefore $2MS\approx 0.042$.

Since it can be seen that $0.4\leq\bar{r}(t)\leq 0.47$ in
Theorem~\ref{T1} we can consider the interval $I=[0.358, 0.512]$.

Then
$$
\max_{I\times[0,2\pi]}\left|\frac{\partial^2}{\partial
r^2}X(r,t)\right|\leq\frac{1}{5}+ 6||\bar r||_{_\infty}=\frac{1}{5}+
6(0.512)=3.272=:K
$$
Finally, $2M^2KS\approx 0.962<1$ and the first part of
Theorem~\ref{T1} applies. Hence equation~\eqref{2aa} has a periodic
solution $r^*(t)$ satisfying
\begin{equation}\label{boundppp}
||\bar{r}-r^*||_{_\infty}\leq 0.042, \end{equation} and is the only
one in this strip.

It can also be seen that
$$
\left|\int_0^{2\pi}\frac{\partial}{\partial
r}X(\bar{r}(t),t)dt\right|>1.2.
$$
Since $2\pi/M \approx 0.9$, the hyperbolicity of $r^*(t)$ follows
applying the second part of the theorem.
\end{proof}

Notice that the example of system~\eqref{ex2f} that we have studied
is $a=\lambda$ and $b=-\lambda$ with $\lambda=1/10.$ With the same
techniques it can be seen that the same function $\bar r(t)$ given
in the statement of Proposition~\ref{ppp} is an approximation of the
unique periodic orbit of the system when $|\lambda-1/10|<1/500,$
which also satisfies~\eqref{boundppp}.

\subsection*{Acknowledgements}
The authors are partially supported by a MCYT/FEDER grant number
MTM2008-03437 by a CIRIT grant number 2009SGR 410.

\end{document}